\theoremstyle{plain}
\newtheorem{theorem}{Theorem}
\newtheorem{lemma}[theorem]{Lemma}
\newtheorem{proposition}[theorem]{Proposition}
\def\Z{\mathbb{Z}}
\def\Q{\mathbb{Q}}
\theoremstyle{definition}
\newtheorem{definition}{Definition}
\newtheorem{remark}{Remark}
\title[$D(n)$-quintuples with square elements]
{$D(n)$-quintuples with square elements}
\begin{document}

\date{}


\author[A. Dujella]{Andrej Dujella}
\address{
Department of Mathematics\\
Faculty of Science\\
University of Zagreb\\
Bijeni{\v c}ka cesta 30, 10000 Zagreb, Croatia
}
\email[A. Dujella]{duje@math.hr}

\author[M. Kazalicki]{Matija Kazalicki}
\address{
Department of Mathematics\\
Faculty of Science\\
University of Zagreb\\
Bijeni{\v c}ka cesta 30, 10000 Zagreb, Croatia
}
\email[M. Kazalicki]{matija.kazalicki@math.hr}

\author[V. Petri\v{c}evi\'c]{Vinko Petri\v{c}evi\'c}
\address{
Department of Mathematics\\
Faculty of Science\\
University of Zagreb\\
Bijeni{\v c}ka cesta 30, 10000 Zagreb, Croatia
}
\email[V. Petri\v{c}evi\'c]{vpetrice@math.hr}

\begin{abstract}
For an integer $n$, a set of $m$ distinct nonzero integers $\{a_1,a_2,\ldots,a_m\}$
such that  $a_ia_j+n$ is a perfect square for all $1\leq i<j\leq m$, is called a 
$D(n)$-$m$-tuple. In this paper, we show that there are infinitely many essentially different $D(n)$-quintuples with square elements.
We obtained this result by constructing genus one curves on a certain double cover of $\mathbb{A}^2$ branched along four curves.

\end{abstract}

\subjclass[2010]{Primary 11D09; Secondary 11G05}
\keywords{rational Diophantine quadruples, Riemann-Hurwitz formula}

\maketitle

\section{Introduction}

For an integer $n$, a set of $m$ distinct nonzero integers
with the property that the product of any two of its distinct
elements plus $n$ is a square, is called a Diophantine $m$-tuple with the property $D(n)$ or $D(n)$-$m$-tuple. The $D(1)$-$m$-tuples (with rational elements) are called simply (rational) Diophantine $m$-tuples, and have been studied since the ancient time. 

The first example of a rational Diophantine quadruple was the set
$$
\left\{\frac{1}{16},\, \frac{33}{16},\, \frac{17}{4},\, \frac{105}{16}\right\}
$$
found by Diophantus. Fermat found the first Diophantine quadruple in integers $\{1,3,8,120\}$. Euler proved that the exist
infinitely many rational Diophantine quintuples (see \cite{Hea}),
in particular he was able to extend the integer Diophantine quadruple found by Fermat,
to the rational quintuple
$$
\left\{ 1, 3, 8, 120, \frac{777480}{8288641} \right\}.
$$
Stoll \cite{Stoll} recently showed that this extension is unique.

In 1969, using linear forms in logarithms of algebraic numbers and a reduction method
based on continued fractions, Baker and Davenport \cite{B-D}
proved that if $d$ is a positive integer such that
$\{1, 3, 8, d\}$ forms a Diophantine quadruple, then $d$ has to be $120$.
This result motivated the conjecture that there does not exist a Diophantine quintuple in integers.
The conjecture has been proved recently by He, Togb\'e and Ziegler \cite{HTZ}
(see also \cite{BTF,duje-crelle}).

On the other hand, it is not known how large can a rational Diophantine tuple be.
In 1999, Gibbs found the first example of rational Diophantine sextuple \cite{Gibbs1}
$$
\left\{ \frac{11}{192}, \frac{35}{192}, \frac{155}{27}, \frac{512}{27}, \frac{1235}{48}, \frac{180873}{16} \right\}.
$$
In 2017, Dujella, Kazalicki, Miki\'c and Szikszai \cite{DKMS} proved that there are
infinitely many rational Diophantine sextuples, while
Dujella and Kazalicki \cite{Duje-Matija} (inspired by the work of Piezas \cite{P})
described another construction of parametric families of rational Diophantine sextuples.
Recently, Dujella, Kazalicki and Petri\v{c}evi\'c in \cite{DKP-sext}
proved that there are infinitely many rational Diophantine sextuples
such that denominators of all the elements (in the lowest terms) in the sextuples are perfect squares, and in \cite{DKP-reg} they proved that there are infinitely many Diophantine sextuples containing two regular quadruples and one regular quintuple.
No example of a rational Diophantine septuple is known.
The Lang conjecture on varieties of general type implies that
the number of elements of a rational Diophantine tuple is bounded by an absolute constant
(see the introduction of \cite{DKMS}). Diophantine $m$-tuples have been studied over the rings other that $\Z$ and $\Q$, for example Dujella and Kazalicki \cite{DK-finite} computed
the number of Diophantine quadruples over finite fields.
For more information on Diophantine $m$-tuples see the survey article \cite{Duje-notices}.

Sets with $D(n)$ properties have also been extensively studied. It is easy to show that there are no integer $D(n)$-quadruples if $n\equiv 2 \pmod{4}$, and it is know that if $n \not \equiv 2 \pmod{4}$ and $n \not \in \{-4,-3,-1,3,5,8,12,20\}$, then there is at least one $D(n)$-quadruple \cite{Duje-generalization}. Recently, Bonciocat, Cipu and Mignotte \cite{BCM} proved that there are no $D(-1)$-quadruples (as well as $D(-4)$-quadruples) thus leaving the existence of $D(n)$-quadruples in the remaining six sporadic cases open. 

Dra\v zi\'c and Kazalicki \cite{DrazicKazalicki} described rational $D(n)$-quadruples with fixed product of elements in terms of points on certain elliptic curves. It is not known if there is a rational Diophantine $D(n)$-quintuple for every $n$, and no example of rational $D(n)$-sextuple is known if $n$ is not a perfect square. 

One can also study $m$-tuples that have $D(n)$-property for more than one $n$.  Ad\v zaga, Dujella, Kreso and Tadi\'c \cite{ADKT} presented several families of Diophantine triples which have $D(n)$-property for two distinct $n$'s with $n\ne 1$ as well as some Diophantine triples which are $D(n)$-sets for three distinct $n$'s with $n\ne 1$.  Dujella and Petri\v cevi\'c in \cite{DP} proved that there are infinitely many (essentially different) integer quadruples which are simultaneously $D(n_1)$-quadruples and $D(n_2)$-quadruples with $n_1 \ne n_2$, and in \cite{DP2} showed that the same thing is true for three distinct $n$'s (since the elements of their quadruples are squares one of $n$'s is equal to zero). Our main result extends the previous results to quintuples.

\begin{theorem}\label{thm:main}
 There are infinitely many nonequivalent quintuples that have $D(n_1)$ property for some $n_1\in \mathbb{N}$ such that all the elements in the quintuple are perfect squares. 
 In particular, there are infinitely many nonequivalent integer quintuples that are simultaneously $D(n_1)$-quintuples and $D(n_2)$-quintuples with $n_1 \ne n_2$ since then we can take $n_2=0$.
\end{theorem}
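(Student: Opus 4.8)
The plan is to reduce the statement to the existence of infinitely many rational points on an explicit double cover of $\mathbb{A}^2$, and to obtain those points by exhibiting a pencil of genus-one curves on that surface, one member of which has positive Mordell--Weil rank. First comes an elementary reduction: any quintuple of perfect squares $\{a_1^2,\dots,a_5^2\}$ automatically has the $D(0)$-property, since $a_i^2a_j^2=(a_ia_j)^2$, so it suffices to produce infinitely many pairwise inequivalent quintuples $\{a_1^2,\dots,a_5^2\}$ of squares of \emph{rational} numbers together with a rational $n_1>0$ for which $a_i^2a_j^2+n_1$ is a perfect square for all $1\le i<j\le5$. Indeed, multiplying all the $a_i$ by a suitable integer $q$ (clearing denominators) and replacing $n_1$ by $q^4n_1$ then turns such a quintuple into an integer quintuple of squares that is simultaneously a $D(q^4n_1)$- and a $D(0)$-quintuple with $q^4n_1\ne0$, and scaling by $q$ preserves inequivalence.

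Next I would set up a two-parameter ansatz: a family of configurations in which four of the elements $a_1^2,\dots,a_4^2$ and the value $n$ are rational functions of two parameters $(x,y)$ — extending a $D(n)$-quadruple-with-square-elements construction in the spirit of \cite{DP2} — arranged so that, upon adjoining the fifth square element $e^2=a_5^2$, all but one of the four remaining square conditions $a_i^2e^2+n=\square$ ($i=1,\dots,4$) hold identically. The single surviving compatibility condition then takes the shape
\[
 w^{2}\;=\;F_1(x,y)\,F_2(x,y)\,F_3(x,y)\,F_4(x,y)
\]
for explicit polynomials $F_1,\dots,F_4$; this defines a surface $S$, a double cover of $\mathbb{A}^2$ branched along the four curves $C_i=\{F_i=0\}$, with the property that every rational point of $S$ lying off the "bad" locus (where some $a_i^2$ vanishes, where $a_i^2=a_j^2$, or where $n\le0$) encodes an honest $D(n)$-quintuple of squares with $n>0$.

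Then I would cut $S$ by a pencil of curves $D_t\cong\mathbb{P}^1$ inside $\mathbb{A}^2$ chosen so that for generic $t$ the branch divisor $C_1+\dots+C_4$ meets $D_t$ transversally in exactly four points and $F_1F_2F_3F_4$ is not a square on $D_t$. The induced double cover $\widetilde D_t\to D_t$ is then irreducible and ramified over exactly four points, so by the Riemann--Hurwitz formula
\[
 2g(\widetilde D_t)-2\;=\;2\bigl(2g(D_t)-2\bigr)+4\;=\;-4+4\;=\;0,
\]
i.e.\ $g(\widetilde D_t)=1$ — it is precisely the count of \emph{four} branch curves that pins the genus at one. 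I would then select a $t_0$ for which $\widetilde D_{t_0}$ carries a rational point (coming, say, from a sporadic $D(n)$-quintuple of squares found by a computer search, or from a rational section of the fibration $S\to\mathbb{P}^1$), put the resulting elliptic curve in Weierstrass form, and check that it has positive rank over $\mathbb{Q}$; alternatively, by Silverman's specialization theorem it is enough to exhibit a non-torsion section of $S\to\mathbb{P}^1$, which forces positive rank for all but finitely many fibres $\widetilde D_t$. Either way one obtains infinitely many rational points of $S$ off the bad locus, hence infinitely many $D(n_1)$-quintuples of squares of rationals with $n_1>0$; and since the scaling-invariant multiset of ratios $\{a_i^2/a_j^2\}$ is an invariant of the equivalence class and varies non-constantly along the family, the map from these points to equivalence classes has finite fibres, so infinitely many inequivalent quintuples result. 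Combined with the reduction above, this gives the theorem.

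I expect the Riemann--Hurwitz computation and the dictionary between points of $S$ and quintuples to be routine, and the two genuinely delicate points to be the following: (i) engineering the two-parameter ansatz so that the compatibility locus is \emph{exactly} a double cover branched along four curves — three of the four fifth-element conditions must be absorbed, since any additional branching would raise the genus above one, and finding such an ansatz is a real design problem; and (ii) producing a rational point on some $\widetilde D_{t_0}$ and proving positivity of the rank, for which there is no formal shortcut — this rests on a well-chosen pencil, typically bootstrapped from a sporadic numerical solution, followed by an explicit (if mechanical) rank computation.
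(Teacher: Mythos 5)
Your proposal reproduces the architectural skeleton of the paper's argument — reduce to rational quintuples whose relevant products are squares, encode the last unforced condition as a double cover of $\mathbb{A}^2$ branched along four curves, pull back genus-zero plane curves and use Riemann--Hurwitz with four ramification points to get genus one, then win with a positive-rank elliptic curve — but the two steps you yourself flag as ``genuinely delicate'' are precisely the mathematical content of the theorem, and you leave both unresolved. The paper's answer to (i) is the notion of an \emph{exotic} quintuple: impose $abcd=1$, which makes every pairwise product among $a,b,c,d$ a square for free (from $ab+1=ab(1+cd)$) and yields the explicit parametrization by $(r,s,t)$ of Proposition \ref{prop:abcd}; then require that \emph{both} $\{a,b,d,e\}$ and $\{a,c,d,e\}$ be regular, which simultaneously forces all four conditions $a_ie+1=\square$ and eliminates $s$ as a rational function of $(r,t)$ via \eqref{eq:regularity}. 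The single surviving condition $ae=\square$ is then the equation of $S$. Without an ansatz of this specificity there is no surface branched along exactly four curves to work with, so the proposal as written does not yet contain a proof.

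There is also a concrete error in step (ii): a pencil $D_t$ whose generic member meets the branch divisor $C_1+\dots+C_4$ ``transversally in exactly four points'' does not exist. The $C_i$ have degrees $3$ and $4$, so by B\'ezout any plane curve $D$ meets $\bigcup_i C_i$ in roughly $14\deg D$ points; to get genus one you must arrange for all but four of these intersections to contribute no ramification (e.g.\ by forcing them onto the pairwise intersections $L=\bigcup_{i\neq j}C_i\cap C_j$ or onto singularities of $D$), which is a severe coincidence. This is why the paper finds only three suitable curves $D_1,D_2,D_3$, by a computer search guided by the order-$16$ symmetry group $G$, rather than a pencil — so the Silverman-specialization shortcut is unavailable, and one is forced into your fallback of exhibiting individual curves with a rational point and verifying positive rank ($E_1,E_2,E_3$ in the paper, each of rank $1$). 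Finally, note that in the paper positivity of $n_1$ is automatic because the construction produces $D(1)$-quintuples, rescaled to $D(u^2)$; in your more general ansatz with $n$ a rational function of the parameters, excluding $n\le 0$ is an open condition that could a priori kill all rational points on a chosen curve, so it cannot simply be relegated to a ``bad locus'' of curves.
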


Note that if $\{a,b,c,d,e\}$ is a $D(n_1)$-quintuple, and $u$ a nonzero rational, then 
$\{u a, u b, u c, u d, u e\}$ is a $D(n_1 u^2)$-quintuple and we say that these two quintuples are equivalent. Since every rational Diophantine quintuple is equivalent to some $D(u^2)$-quintuple whenever $u$ is an integer divisible by the common denominator of the elements in the quintuple, Theorem \ref{thm:main} will follow if we prove that there are infinitely many rational Diophantine quintuples with the property that the product of any two of its elements is a perfect square.

A Diophantine quadruple $\{a,b,c,d\}$ is called regular if 
$$(a+b-c-d)^2=4(ab+1)(cd+1).$$ 

\begin{definition}
	We say that rational Diophantine quintuple $\{a,b,c,d,e\}$ is {\it exotic} if $abcd=1$, quadruples $\{a,b,d,e\}$ and $\{a,c,d,e\}$ are regular, and if the product of any two of its elements is a perfect square.  
\end{definition}

Denote by $S$ an affine surface defined over $\Q$ by
{\small
\begin{equation*}
 (1 + r - 2 r^2 t - t^2 + r t^2)(-1 + r + 2 r^2 t + t^2 + r t^2)(-r - r^2 - 2t - r t^2 + r^2 t^2)(r - r^2 - 2 t + r t^2 + r^2 t^2)  = y^2.
\end{equation*}
}

Define a rational map $p:S \rightarrow \mathbb{A}^5$ given by $p(r,t,y)=(a,b,c,d,e)$ where
\begin{align*}
	a&=\frac{(r^2-1)(t^2-1)(s^2-1)}{8rst},\\
	b&=\frac{2(t^2-1)rs}{(r^2-1)(s^2-1)t},\\
	c&=\frac{2(s^2-1)rt}{(r^2-1)(t^2-1)s},\\
	d&=\frac{2(r^2-1)st}{(s^2-1)(t^2-1)r},\\
\end{align*}
where $s=\frac{-1 + r^2 + t + r^2 t}{-1 - r^2 - t + r^2 t}$ and $e$ is defined by formula \eqref{eq:e1} from Section \ref{sec:story}.

\begin{proposition}\label{prop: param}
For every exotic quintuple $(a,b,c,d,e)$ there is a rational point $(r,t,y)\in S(\Q)$ on curve $S$ such that $(a,b,c,d,e)=p(r,t,y)$.
Conversely, if $(r,t,y)\in S(\Q)$ is a rational point on $S$ in a domain of $p$, then $p(r,t,y)$ is exotic quintuple provided that all elements are distinct and nonzero.
\end{proposition}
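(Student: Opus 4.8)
The strategy is to invert the formulas defining $a,b,c,d$: the parameters $r,s,t$ are just encodings of the three pairs $\{a,d\}$, $\{a,c\}$, $\{a,b\}$ (each of which is simultaneously a Diophantine pair and a pair with square product), while the coordinate $y$ records the remaining square conditions on the products $ae,be,ce,de$ that are not already forced by the shape of the construction. The converse implication (from a point of $S$ to an exotic quintuple) is essentially a verification of explicit identities; the forward implication is the one requiring care, since one must choose the correct square roots.

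\emph{Forward direction.} Let $\{a,b,c,d,e\}$ be exotic. Because $ad$ and $ad+1$ are both perfect squares, the quadratic $z^{2}-2\sqrt{ad}\,z-1=0$ has rational roots; let $r$ be one of them, so $ad=\bigl(\tfrac{r^{2}-1}{2r}\bigr)^{2}$ and $ad+1=\bigl(\tfrac{r^{2}+1}{2r}\bigr)^{2}$, and define $t$ from $\{a,b\}$ and $s$ from $\{a,c\}$ the same way. From $abcd=1$ one gets $(ad)(ab)(ac)=a^{3}bcd=a^{2}$, so the three square roots can be given signs making $a=\tfrac{(r^{2}-1)(t^{2}-1)(s^{2}-1)}{8rst}$; then $b=ab/a$, $c=ac/a$, $d=ad/a$ are exactly the values produced by $p$. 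To see that $s$ is the prescribed rational function of $r,t$, subtract the two regularity relations for $\{a,b,d,e\}$ and $\{a,c,d,e\}$ (both being the symmetric expression that vanishes on regular quadruples); dividing by $b-c\neq 0$ gives $e=\tfrac{b+c-2(a+d)}{2(1+2ad)}$. Substituting this back into one regularity relation and rewriting everything in $r,s,t$ yields, after discarding degenerate factors, the identity $s=\tfrac{-1+r^{2}+t+r^{2}t}{-1-r^{2}-t+r^{2}t}$, and the same expression for $e$ rewritten in $r,t$ is the content of \eqref{eq:e1}. Finally, the defining equation $y^{2}=(\text{the four branch factors})$ of $S$ is precisely the statement that $ae,be,ce,de$ are perfect squares (equivalently, using $(ae)(be)(ce)(de)=e^{4}$ together with the construction of $e$, a single such condition); since the quintuple is exotic this holds, so the right-hand side is a perfect square and admits a rational square root $y$, and then $(r,t,y)\in S(\Q)$ maps to $(a,b,c,d,e)$.

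\emph{Converse direction.} Conversely, suppose $(r,t,y)\in S(\Q)$ lies in the domain of $p$ and $a,b,c,d,e$ are distinct and nonzero. Substituting the expression for $s$, one verifies as polynomial identities in $r,t$ that $abcd=1$, that $ab=\bigl(\tfrac{t^{2}-1}{2t}\bigr)^{2}$, $ac=\bigl(\tfrac{s^{2}-1}{2s}\bigr)^{2}$, $ad=\bigl(\tfrac{r^{2}-1}{2r}\bigr)^{2}$, and that $ab+1=\bigl(\tfrac{t^{2}+1}{2t}\bigr)^{2}$, etc.; hence all six products $a_{i}a_{j}$ ($1\le i<j\le 4$) and all $a_{i}a_{j}+1$ are squares. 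The formula \eqref{eq:e1} is built so that $\{a,b,d,e\}$ and $\{a,c,d,e\}$ are regular — the constraint on $s$ being exactly what forces the two regular extensions of $\{a,b,d\}$ and $\{a,c,d\}$ to coincide — and since a regular extension of a Diophantine triple is a Diophantine quadruple, $ae+1,be+1,ce+1,de+1$ are squares automatically. Lastly, the equation $y^{2}=(\text{the four branch factors})$ forces $ae,be,ce,de$ to be perfect squares, so $\{a,b,c,d,e\}$ is exotic.

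\emph{Main obstacle.} The crux is the elimination in the forward direction: one must check that the square roots $\sqrt{ad},\sqrt{ab},\sqrt{ac}$ can be chosen compatibly, and that the two regularity hypotheses, after eliminating $e$ and using $abcd=1$, collapse to exactly the rational function $s=s(r,t)$ rather than to a many-valued correspondence or an identity carrying spurious components. This is a statement about the particular polynomials entering $p$ and \eqref{eq:e1}, and I would establish it by the explicit substitution just described; the mirror-image point in the converse — that the four branch factors are, up to explicit rational squares, exactly what is needed to force $ae,be,ce,de$ to be squares — is verified by the same kind of computation.
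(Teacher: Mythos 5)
Your overall strategy coincides with the paper's: parametrize quadruples with $abcd=1$ by the square roots of $ab,ac,ad$ (this is exactly the paper's Proposition \ref{prop:abcd}), impose the two regularity conditions to tie $s$ to $(r,t)$, and record the single square condition on $ae$ (the conditions on $be,ce,de$ following from $be=(ab)(ae)/a^{2}$, etc.) as the defining equation of $S$. Your elimination of $e$ by subtracting the two symmetric regularity relations, yielding $e=\frac{b+c-2(a+d)}{2(1+2ad)}$, is a clean variant of the paper's route, which instead writes down the two regular extensions $e_1,e_1'$ of $\{a,b,d\}$ and $e_2,e_2'$ of $\{a,c,d\}$ and uses the symmetries $\sigma_1,\sigma_2$ to reduce to the single equation $e_1=e_2$. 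The converse direction is, as you say, a verification of explicit identities and is fine in outline.

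There is, however, a genuine gap in the forward direction, at precisely the point you identify as the crux but then assert rather than establish. The compatibility condition obtained after eliminating $e$ does \emph{not} ``collapse to exactly the rational function $s=s(r,t)$ after discarding degenerate factors.'' It factors as
$$(s-t)(1+st)\,f_1(r,s,t)\,f_2(r,s,t)=0,$$
where only the first two factors are degenerate: $f_1=0$ is the relation \eqref{eq:regularity} used to define $S$, while $f_2=0$ (namely $-1-r^2+s-r^2s+t-r^2t+st+r^2st=0$) is a second, non-degenerate branch whose triples also encode exotic quintuples. A priori an exotic quintuple could arise only from a triple on $f_2=0$ and would then produce no point of $S$. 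The missing idea is the involution $\sigma_3(r,s,t)=(-1/r,s,-1/t)$: it fixes $a,b,c,d,e_1,e_2$ and maps the surface $f_2=0$ birationally onto $f_1=0$ away from the locus $rst(r^2-1)(s^2-1)(t^2-1)=0$ (which carries no quintuples), so every exotic quintuple is indeed reachable from the branch $f_1=0$. Without this, or an equivalent symmetry argument, the forward half of the proposition is not proved. A smaller point in the same spirit: the choices of sign in $\sqrt{ab},\sqrt{ac},\sqrt{ad}$ and of $e_1$ versus $e_1'$, $e_2$ versus $e_2'$ also need to be absorbed by symmetries (the paper uses $\sigma_1,\sigma_2$ for this); your linear elimination of $e$ sidesteps the latter, but the former should still be mentioned.
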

Note that one can explicitly determine the degeneracy locus of map $p$ - a finite set of curves on $S$ such that for every $(r,t,y)\in S(\Q)$ which is not on any of those curves we have that $p(r,t,y)$ is exotic quintuple. Thus, any curve on $S$ with an infinite number of rational points will give rise to infinitely many exotic quintuples.

Denote by $\pi:S \rightarrow \mathbb{A}^2$ the projection $\pi(r,t,y)=(r,t)$. Let $D_1, D_2$ and $D_3$ be plane genus zero curves in $\mathbb{A}^2$ defined by
\begin{align*}
	D_1&: r^2t^2 - 4r^2t - 3r^2 - 2rt^2 - 2r - 3t^2 - 4t + 1 = 0,\\
	D_2&: r^2 t - r^2 + 2 r t^2 + 2 r - t - 1 =0,\\
	D_3&:  r^2 t^2 + 3 r^2 - t^2 + 2t - 1 = 0,
\end{align*}
and by $\widetilde{D_i}=\pi^{-1}(D_i)$ pullbacks of these curves to $S$ via $\pi$.

\begin{proposition}\label{prop:main}
Curves $\widetilde{D_1}, \widetilde{D_2}$ and $\widetilde{D_3}$ are genus one curves defined over $\Q$ birationally equivalent to the elliptic curves with positive Mordell-Weil rank. In particular, there are infinitely many exotic quintuples.	
\end{proposition}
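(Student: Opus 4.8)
The plan is to analyze each of the three surfaces $\widetilde{D_i}$ separately, using essentially the same three-step recipe, and to produce in each case an explicit model as a plane cubic (or a curve of genus one) together with an explicit rational point of infinite order, which already forces the Mordell--Weil rank to be positive and hence yields infinitely many rational points; the last sentence then follows by combining this with the remark after Proposition~\ref{prop: param}, provided one checks that the images under $p$ avoid the finite degeneracy locus.

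First I would set up coordinates on $\widetilde{D_i}$. By definition $\widetilde{D_i}=\pi^{-1}(D_i)$ is cut out inside $S$ by the single equation of $D_i$, so it is the curve
\[
\widetilde{D_i}:\quad F_i(r,t)=0,\qquad y^2 = G(r,t),
\]
where $G(r,t)$ is the quartic-in-four-factors product defining $S$. Since each $D_i$ has genus zero, I would first rationally parametrize $D_i$ by a single parameter, say $(r,t)=(\rho(u),\tau(u))$ with $\rho,\tau\in\Q(u)$ --- this is routine because each $D_i$ is an explicitly given conic or singular plane curve with an obvious rational point (e.g. one can read a rational point off the constant/leading terms, or use the standard line-pencil parametrization through a singular point). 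Substituting into $y^2=G(r,t)$ turns $\widetilde{D_i}$ into a curve of the form $y^2 = H_i(u)$ with $H_i\in\Q[u]$; after clearing square factors from $H_i$ this is a hyperelliptic model whose degree (in the generic case $3$ or $4$) I would compute, and then invoke Riemann--Hurwitz / the genus formula for $y^2=H_i(u)$ to confirm the genus is exactly one. Concretely one expects $\deg H_i\in\{3,4\}$ after simplification; the degree-$4$ case is handled by choosing a rational branch point (or a rational point on the curve) to move to infinity, reducing to a Weierstrass cubic.

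Second, having a genus-one curve $y^2=H_i(u)$ with a rational point $P_0$ (coming, e.g., from a value of $u$ making $H_i$ a square, or from a point at infinity), I would run the standard transformation to short Weierstrass form $E_i:\ Y^2 = X^3 + A_iX + B_i$ over $\Q$, record the explicit birational map, and transport a second, independently found rational point $P$ on the model to a point $Q\in E_i(\Q)$. Third, I would certify that $Q$ has infinite order: the cleanest way is to exhibit its nonvanishing image under a reduction map, or simply to check $2^k Q$ does not vanish for small $k$ while noting that any torsion point would be killed by reduction bounds; in practice I would just cite that a Magma/PARI height computation gives $\hat h(Q)>0$, equivalently that $Q$ is not in the (explicitly computed) torsion subgroup and is nontrivial modulo several good primes. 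This establishes $\mathrm{rank}\,E_i(\Q)\ge 1$, so $E_i(\Q)$ and therefore $\widetilde{D_i}(\Q)$ is infinite.

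The main obstacle I anticipate is purely computational rather than conceptual: the defining polynomial $G(r,t)$ of $S$ is a product of four quadratic-ish factors, so after substituting the parametrization of $D_i$ the polynomial $H_i(u)$ is of fairly high degree and must be simplified by pulling out all square factors before the genus becomes visibly one; tracking these square factors correctly (and verifying no extra components or singular fibres spoil the genus count) is the delicate part. A secondary point requiring care is the interface with Proposition~\ref{prop: param}: one must confirm that the curve $\widetilde{D_i}$ is not contained in the degeneracy locus of $p$, i.e. that for all but finitely many of the infinitely many rational points produced, the five images $a,b,c,d,e$ are distinct and nonzero, so that genuinely distinct exotic quintuples result; since the degeneracy locus is a proper closed subset and $\widetilde{D_i}$ is irreducible of dimension one, it suffices to exhibit one rational point of $\widetilde{D_i}$ with nondegenerate image, which the explicit point $Q$ above supplies. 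Finally, "infinitely many exotic quintuples" in the strong (nonequivalent) sense follows because the equivalence relation $\{a_i\}\sim\{u a_i\}$ has finite fibres on any fixed curve's worth of quintuples only if the quintuples are forced to be scalar multiples of each other, which they are not here — this can be seen, e.g., by checking that a ratio of two of the coordinates, such as $a/b$, is a nonconstant rational function on $\widetilde{D_i}$, hence takes infinitely many values.
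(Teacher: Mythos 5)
Your proposal follows essentially the same route as the paper: for each $i$ the authors parametrize $D_i$ by an explicit $\psi_i(u)$, substitute into the defining equation of $S$ to obtain a quartic model $c\,q_1(u)q_2(u)=v^2$ for $\widetilde{D_i}$, pass to the birationally equivalent elliptic curve $E_i$ (rank $1$, torsion $\Z/2\Z\times\Z/2\Z$), and exhibit the nondegenerate exotic quintuple coming from a generator of infinite order. Your plan is correct; the paper simply supplies the explicit computational data that your recipe calls for.
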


For an example, consider the following parametrization of $\widetilde{D_3}$
$$(r,t)=\left(-\frac{2 u+1}{u^2+u+1},\frac{u^2+4 u+1}{(u-1) (u+1)}\right).$$
It defines a curve birational to $\widetilde{D_3}$ given by quartic
$$-48 \left(u^2-3 u-1\right) \left(u^2+5 u+3\right)=v^2.$$
The point $(u,v)=(3,36)$ of this quartic corresponds to $(r,t)=(-\frac{7}{13},\frac{11}{4})$ which in turn is mapped by $p$ to the Diophantine quintuple 
$$M=\left\{\frac{225^2}{480480},  \frac{2548^2}{480480},\frac{286^2}{480480},\frac{1408^2}{480480},\frac{819^2}{480480}\right\}.$$
With this quintuple our investigation begun.

\section{Parametrizing exotic quintuples} \label{sec:story}

Our starting point was experimental discovery of a exotic rational Diophantine quintuple $M$ (defined in the introduction)
which by clearing denominators gives Diophantine $D(480480^2)$-quintuple with square elements.
This quintuple $\{a,b,c,d,e\}$ has the following structure which motivated our construction of infinite families
\begin{itemize}
	\item[i)] $abcd=1$
	\item[ii)] quadruples $\{a,b,d,e\}$ and $\{a,c,d,e\}$ are regular.
\end{itemize}

\begin{proposition}\label{prop:abcd}
	Let $\{a,b,c,d\}$ be a rational Diophantine quadruple with $abcd=1$. Then there exist $r,s,t \in \Q$ such that 
	$$a=xyz,\quad  b=\frac{x}{yz},\quad c=\frac{y}{xz}, \quad d=\frac{z}{xy},$$
	where $x=\frac{t^2-1}{2t}, y=\frac{s^2-1}{2s}$ and $z=\frac{r^2-1}{2r}$. In particular,
	the product of any two elements of the quadruple is a perfect square.
\end{proposition}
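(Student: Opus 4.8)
The plan is to reduce everything to one observation: for a Diophantine quadruple with $abcd=1$, each of the products $ab$, $ac$, $ad$ is a square of a rational number. This will simultaneously give the ``in particular'' clause and supply the parameters, after which recovering $a,b,c,d$ and putting $x,y,z$ in the prescribed shape is routine.

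\emph{Step 1: all pairwise products are perfect squares.} Since $abcd=1$ we have $cd=1/(ab)$, hence
$$cd+1=\frac{ab+1}{ab}.$$
Both $ab+1$ and $cd+1$ are squares of rationals by hypothesis; since moreover the four elements are nonzero and distinct, a short case analysis on signs (note that $ab+1$ is a \emph{nonnegative} rational square) rules out the degenerate value $ab=-1$ and forces $ab>0$, so that $cd+1\ne 0$. The displayed identity then exhibits $ab=(ab+1)/(cd+1)$ as a quotient of two nonzero rational squares, so $ab=x^2$ for some nonzero $x\in\Q$; by symmetry $ac=y^2$ and $ad=z^2$ with $y,z\in\Q$ nonzero. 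Finally, from $ab\cdot cd=ac\cdot bd=ad\cdot bc=abcd=1$ we get $cd=1/x^2$, $bd=1/y^2$, $bc=1/z^2$, so every product of two elements of $\{a,b,c,d\}$ is a perfect square.

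\emph{Step 2: recovering $a,b,c,d$ from $x,y,z$.} Multiplying the three relations and using $bcd=1/a$ gives $x^2y^2z^2=(ab)(ac)(ad)=a^2$, so $a=\pm xyz$. Replacing $x$ by $-x$ if necessary (which changes neither $ab=x^2$ nor $x^2+1$) we may assume $a=xyz$; then $b=(ab)/a=x/(yz)$, $c=(ac)/a=y/(xz)$, and $d=(ad)/a=z/(xy)$, exactly the asserted shape.

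\emph{Step 3: putting $x,y,z$ in the required form.} It remains to produce $t\in\Q$ with $x=\frac{t^2-1}{2t}$, i.e.\ to solve $t^2-2xt-1=0$, whose roots are $t=x\pm\sqrt{x^2+1}$. But $x^2+1=ab+1$ is a perfect square by hypothesis, so $t:=x+\sqrt{ab+1}$ is a nonzero rational, and a direct computation confirms $\frac{t^2-1}{2t}=x$. The same argument applied to $y^2+1=ac+1$ and $z^2+1=ad+1$ yields nonzero rationals $s:=y+\sqrt{ac+1}$ and $r:=z+\sqrt{ad+1}$ with $y=\frac{s^2-1}{2s}$ and $z=\frac{r^2-1}{2r}$, which finishes the proof. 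The only delicate point is the elimination of the degenerate cases ($ab=-1$, and likewise $ac=-1$, $ad=-1$) in Step 1; everything else is a straightforward manipulation of the hypotheses that $ab+1,\dots,cd+1$ are rational squares together with $abcd=1$.
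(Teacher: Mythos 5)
Your proof is correct and follows essentially the same route as the paper: deduce from $abcd=1$ that $ab,ac,ad$ are rational squares, set $ab=x^2$, $ac=y^2$, $ad=z^2$, recover $a=xyz$ etc.\ up to signs, and then solve $x=\frac{t^2-1}{2t}$ using that $x^2+1=ab+1$ is a square. The only difference is that you explicitly flag and dispose of the degenerate case $ab=-1$ (which the paper's one-line identity $ab+1=ab(1+cd)$ silently skips), which is a welcome extra bit of care.
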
 
\begin{proof}
From $ab+1=ab+abcd=ab(1+cd)$ it follows that $ab$ is a perfect square, and similarly for other products.
Set $ab=x^2, ac=y^2$ and $ad=z^2$, with $x,y,z \in \Q$. It follows  $a^2=\frac{ab\cdot ac}{bc}=\frac{x^2 y^2}{1/z^2}$, hence
$a=xyz$ and similarly $b=\frac{x}{yz}$, $c=\frac{y}{xz}$ and $d=\frac{z}{xy}$ (with the appropriate choice of signs).
Since $x^2+1$ is a perfect square, there is $t\in \Q$ such that $x=\frac{t^2-1}{2t}$, and similarly for $y$ and $z$. The claim follows. 
\end{proof}

To extend quadruple $\{a,b,c,d\}$ defined by $r,s,t\in \Q$ (as in Proposition \ref{prop:abcd}) to an exotic quintuple it is enough that
triples $\{a,b,d\}$ and $\{a,c,d\}$ have a common regular extension $e$, and that $ae$ is a perfect square. 

Since both $\{a,b,d\}$ and $\{a,c,d\}$ extend to regular quadruples in two different ways, $\{e_1,e_1'\}$ and $\{e_2,e_2'\}$ respectively, to check if there is a common regular extension a priori we have four conditions to inspect.
It is easy to see that the maps $\sigma_1(r,s,t)=(1/r,s,1/t)$ and $\sigma_2(r,s,t)=(\frac{1}{r},s,-t)$ are symmetries of the equations from Proposition \ref{prop:abcd} defining $a,b,c$ and $d$, hence both $(r,s,t)$ and $\sigma_i(r,s,t)$ give rise to the same quadruple $(a,b,c,d)$. In general the map $(r,s,t) \mapsto (a,b,c,d)$ is $32:1$, but we will not need the whole group of symmetries. Moreover, $\sigma_1$ ``maps'' $e_2$ to $e_2'$ and fixes $e_1$, while $\sigma_2$ maps $e_1$ to $e_1'$ and fixes $e_2$. Therefore, to parametrize quadruples with common regular extension as above it is enough to solve $e_1=e_2$ for any choice of $e_1$ and $e_2$.
Thus for the choice of $e_1$ and $e_2$ 
{
\begin{equation}\label{eq:e1}e_1=\frac{u_1(r,s,t)u_2(r,s,t)u_3(r,s,t)u_4(r,s,t)}{8 (-1 + r) r (1 + r) (-1 + s) s (1 + s) (-1 + t) t (1 + t)},
\end{equation}
where 
\begin{align*}u_1(r,s,t)&=-1 - r + s - r s - t - r t - s t + r s t,\\ u_2(r,s,t)&=1 + r - s + r s - t - 
r t - s t + r s t,\\ u_3(r,s,t)&=1 - r - s - r s + t - r t + s t + r s t,\\ u_4(r,s,t)&=-1 + 
r + s + r s + t - r t + s t + r s t\\
\end{align*}
}
and 
{
	$$e_2=\frac{v_1(r,s,t)v_2(r,s,t)v_3(r,s,t)v_4(r,s,t)}{8 (-1 + r) r (1 + r) (-1 + s) s (1 + s) (-1 + t) t (1 + t)},$$
}
where 
\begin{align*}
v_1(r,s,t)&=-1 - r - s - r s + t - r t - s t + r s t,\\ v_2(r,s,t)&=1 + r - s - r s - t + 
r t - s t + r s t,\\ v_3(r,s,t)&=1 - r + s - r s - t - r t + s t + r s t,\\ v_4(r,s,t)&=-1 + 
r + s - r s + t + r t + s t + r s t\\
\end{align*}
we obtain the following condition
$$(s - t) (1 + s t) (1 - r^2 - s - r^2 s - t - r^2 t - s t + r^2 s t) (-1 - r^2 + s - r^2 s + t - r^2 t + s t + r^2 s t)=0.$$

Solutions to $(s - t)(1 + st)=0$ induce degenerate quintuples (with zero element or with two identical elements) so we can ignore them.
To reduce the argument further, note that $a,b,c,d,e_1$ and $e_2$ are fixed by the map $\sigma_3(r,s,t)=(-1/r,s,-1/t)$. Moreover, $\sigma_3$
defines a birational map between affine plane surfaces defined by $1 - r^2 - s - r^2 s - t - r^2 t - s t + r^2 s t=0$ and $-1 - r^2 + s - r^2 s + t - r^2 t + s t + r^2 s t=0$
which is an isomorphism outside the vanishing set of $rst(r^2-1)(s^2-1)(t^2-1)=0$. Since the triples $(r,s,t)$ from this vanishing set do not correspond to Diophantine quintuples, 
without the loss of generality we can assume that the triples $(r,s,t)$ describing rational Diophantine quintuples $\{a,b,c,d,e\}$, such that $abcd=1$ and that both $\{a,b,d,e\}$ and $\{a,c,d,e\}$ are regular, satisfy
\begin{equation}\label{eq:regularity}
1 - r^2 - s - r^2 s - t - r^2 t - s t + r^2 s t=0.
\end{equation}

On the other hand, the condition that $ae_1$ is a perfect square is equivalent to
\begin{align*}
	&(-1 - r + s - rs - t - rt - st + rst)(1 + r - s + rs - t - rt - st + rst)\\&(1 - r - s - rs + t - rt + st + rst)(-1 + r + s + rs + t - rt + st + rst) = y^2.
\end{align*}

Substituting $s$ from \eqref{eq:regularity} we obtain a defining equation for the affine surface $S$ defined in the introduction. Thus, we have constructed a rational map from the introduction
$$p:S \rightarrow \mathbb{A}^5,\quad p(r,t,y)=(a,b,c,d,e),$$ 
(defined by \eqref{eq:e1},  \eqref{eq:regularity} and  Proposition \ref{prop:abcd}) and proved that for every exotic quintuple $(a,b,c,d,e)$ there is a rational point $(r,t,y)$ on the surface $S$ such that $p(r,t,y)=(a,b,c,d,e)$. Note that the pair $(r,t)$ defining the point is not necessary unique.

Conversely, given $(r,t,y)\in S(\Q)$ such that $p(r,t,y)$ is defined, the quintuple $p(r,t,y)$ will be exotic if it is non-degenerate (all elements must be distinct and nonzero). This finishes the proof of Proposition \ref{prop: param}.

\section{Construction of curves on $S$}

If we show that the surface $S$ has infinitely many rational points outside the degeneracy locus of $p$ (a finite set of curves on $S$ whose rational points either map under $p$ to degenerate quintuples or $p$ is not defined for them), then Proposition \ref{prop: param} will imply Theorem \ref{thm:main}. For that, we will construct genus one curves on $S$ which are defined over $\Q$ and birationaly equivalent to elliptic curves (over $\Q$) of positive Mordell-Weil rank.

Denote by $\pi:S \rightarrow \mathbb{A}^2$ the projection $\pi(r,t,y)=(r,t)$,
and denote by
\begin{align*}
	 C_1&: 1 + r - 2 r^2 t - t^2 + r t^2=0,\\ C_2&:-1 + r + 2r^2 t + t^2 + r t^2=0,\\ C_3&: -r - r^2 - 2t - r t^2 + r^2 t^2=0,\\ C_4&:r - r^2 - 2 t + r t^2 + r^2 t^2=0,
\end{align*}	 
curves over which the map $\pi$ is ramified.

The configuration of these curves has a large symmetry group. One can readily check that the maps
\begin{align*}
	\tau_1(r,t)&=(-r,t), \quad \tau_2(r,t)=\left(\frac{1}{r},\frac{1}{t}\right),\\ \tau_3(r,t)&=\left(-r,\frac{t-1}{t+1}\right),
\end{align*}
extend to the birational automorphisms of $C_1\cup C_2 \cup C_3 \cup C_4$, and also to the birational automorphisms of $S$. While we have already encountered maps $\tau_1$ and $\tau_2$, note that if $p(r,t)=(a,b,c,d,e)$, then
$p(\tau_3(r,t))=(-d,-c,-b,-a,-e)$. They generate a group $G$ of order $16$ which acts on the set of plane curves $D \subset \mathbb{A}^2$. We say that two plane curves $D_1$ and $D_2$ are equivalent if there is $\tau \in G$ such that it generates birational map from $D_1$ to $D_2$.

\begin{remark}
	We remark one curiosity related to $\tau_3$. Note that if $(a,b,c,d)$ is rational Diophantine quadruple with $abcd=1$ which corresponds to the triple $(r,s,t)$, then rational Diophantine quadruple $(\frac{1}{a},\frac{1}{b},\frac{1}{c}, \frac{1}{d})$ corresponds to the triple $(\frac{r-1}{r+1}, \frac{s-1}{s+1}, \frac{t+1}{t-1})$.
\end{remark}

Let $D\subset \mathbb{A}^2$ be a plane curve of genus zero, and denote by $\widetilde{D}=\pi^{-1}(D)$ a pullback of $D$ under $\pi$. Assume that $\widetilde{D}$ is absolutely irreducible of genus $g$. Genus $g$ is controlled by the ramification of $\pi|\widetilde{D}$. More precisely, if we resolve singularities of the projective closures of $\widetilde{D}$ and $D$, and apply Riemman-Hurwitz formula to the corresponding extension $\widetilde{\pi}$ of $\pi|\widetilde{D}$ we will get
$$2g-2=-4+N,$$
where $N$ is the number of ramification points of $\widetilde{\pi}$. In particular, if we want $g$ to be one, then $N$ must be equal to four.

Denote by $L=\bigcup_{i\ne j} C_i \cap C_j$. We have the following characterization of ramification points of $\widetilde{\pi}$.
\begin{lemma}
	Assume that, for some $i$, $C_i$ and $D$ intersect transversally at $P$. If $P \notin L$ and if $P$ is nonsingular on $D$, then $\widetilde{\pi}$ is ramified at $P$.
\end{lemma}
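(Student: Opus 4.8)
We want to show that the double cover $\widetilde\pi\colon\widetilde D\to D$ ramifies at a point $P$ at which some branch curve $C_i$ meets $D$ transversally, provided $P\notin L$ and $P$ is a smooth point of $D$. The surface $S$ is defined by $y^2=F(r,t)$ with $F=f_1 f_2 f_3 f_4$, where $f_i=0$ is the equation of $C_i$; thus $\pi$ is the double cover of $\mathbb A^2$ branched exactly along $\{F=0\}=C_1\cup C_2\cup C_3\cup C_4$, and $\widetilde D$ is cut out in $S$ by pulling back a local parameter of $D$. The key local fact is standard: a double cover $y^2=F$ of a curve $D$ (or rather of its normalization, via restriction) is ramified over a point $P$ of $D$ precisely when the order of vanishing of $F|_D$ at $P$ is odd. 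So the whole statement reduces to computing $\operatorname{ord}_P(F|_D)$ and showing it is odd under the stated hypotheses.

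**First step.** Fix $i$ with $P\in C_i\cap D$. Since $P\notin L=\bigcup_{j\ne k}C_j\cap C_k$, in particular $P\notin C_j$ for every $j\ne i$ (indeed if $P$ lay on two of the $C_j$ it would be in $L$; and if $P\in C_i\cap C_j$ with $j\ne i$ that is again in $L$). Hence $f_j(P)\ne0$ for all $j\ne i$, so near $P$ the function $F=f_i\cdot\prod_{j\ne i}f_j$ differs from $f_i$ by a unit in the local ring $\mathcal O_{D,P}$, and therefore $\operatorname{ord}_P(F|_D)=\operatorname{ord}_P(f_i|_D)$.

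**Second step.** Now use the two remaining hypotheses: $P$ is a smooth point of $D$, and $C_i$ meets $D$ transversally at $P$ (which in particular forces $P$ to be a smooth point of $C_i$, since transversal intersection presupposes well-defined distinct tangent directions). Because $D$ is smooth at $P$, the local ring $\mathcal O_{D,P}$ is a discrete valuation ring with uniformizer, say, $\pi_P$; and $\operatorname{ord}_P(f_i|_D)$ is exactly the intersection multiplicity $(C_i\cdot D)_P$. Transversality of two curves that are each smooth at $P$ means their tangent lines at $P$ are distinct, which is equivalent to $(C_i\cdot D)_P=1$. Therefore $\operatorname{ord}_P(f_i|_D)=1$, hence $\operatorname{ord}_P(F|_D)=1$, an odd number, so $\widetilde\pi$ is ramified at (the point of $\widetilde D$ lying over) $P$. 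One should note that we are implicitly working on the normalizations of $\widetilde D$ and $D$ — the point $P$, being smooth on $D$ and not on any other branch locus, lifts unambiguously — but this causes no trouble since ramification of the normalized cover at a smooth point of the base is governed by the same parity of $\operatorname{ord}_P(F|_D)$.

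**Main obstacle.** The only genuinely delicate point is the reduction of ``ramification of $\widetilde\pi$ at $P$'' to ``$\operatorname{ord}_P(F|_D)$ is odd'' — i.e.\ making sure the restriction of the double cover $y^2=F$ to the (normalization of the) curve $D$ behaves as one expects, with no hidden contribution from the way $\widetilde D$ sits inside the singular or reducible total space $S$, and no issue at the points where $D$ itself might be singular or might meet $L$ (which is precisely why those are excluded). Once that local picture is set up cleanly, the argument above is a short computation with orders of vanishing; I would spell out the DVR description of $\mathcal O_{D,P}$ and the identity $\operatorname{ord}_P(f_i|_D)=(C_i\cdot D)_P=1$, and conclude.
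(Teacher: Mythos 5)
Your proof is correct. The paper states this lemma without proof, so there is no argument of the authors to compare against; what you give is the standard (and surely intended) one: ramification of the restricted double cover $y^2=F|_D$ over a smooth point $P$ of $D$ is governed by the parity of $\operatorname{ord}_P(F|_D)$, the hypothesis $P\notin L$ lets you discard the factors $f_j$ with $j\ne i$ as units at $P$, and transversality of $C_i$ and $D$ at a smooth point gives $\operatorname{ord}_P(f_i|_D)=(C_i\cdot D)_P=1$, which is odd. You also correctly flag the only delicate points — working on the normalizations and noting that transversality presupposes smoothness of $C_i$ at $P$ — so nothing is missing.
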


The previous lemma suggests that if we want to search for a genus zero plane curve $D$ for which $\widetilde{D}=\pi^{-1}(D)$ is genus one curve, our best candidates would be curves that
intersect  $\cup_i C_i$ outside $L$ in as few points possible. This task gets harder as the degree of $D$ gets bigger - by B\'ezout's theorem $D$ and $C_i$ intersect at $3\deg{D}$ or $4\deg{D}$ points (counting multiplicities and points at infinity). Also, to control the genus of $D$ one needs to specify singularities (whose number is described by Pl\"ucker's formula) which a priori can be anywhere (but it works best for us if they are on $\cup C_i$ since then this intersection will probably not count for ramification) so this made systematic computer search impossible for us to implement. 

In addition to this approach, in order to employ the symmetry group $G$, we also searched for curves $D$ on which some $\tau \in G$ induces birational automorphism. 
The logic behind this is that if, for example, such $D$ intersects $C_i$ (ideally) in $L$, and if $\tau$ induced birational map between $C_i$ and $C_j$, then $D$ also intersects $C_j$ in $L$ (since $\tau$ maps $L$ into itself).

\section{Results}

In our computer search we found three unequivalent genus zero curves $D_1,D_2, D_3 \subset \mathbb{A}^2$ defined in the introduction
such that curves $\widetilde{D_i}=\pi^{-1}(D_i)$ are genus one curves birational to the elliptic curve with positive Mordell-Weil rank.
Interestingly, the orbit of each of these curves under the action of $G$ is of size $8$ - the curves are fixed by elements $\tau_2, \tau_2 \circ \tau_3^2$ and $\tau_1$ respectively.
The following analysis of curves $\widetilde{D_i}$ finishes the proof of Proposition \ref{prop:main}.

\subsection{Curve $\widetilde{D_1}$}
We have the following parametrization of the curve $D_1$
$$\psi_1(u)= \left(-\frac{3 u^2+4 u-1}{3 u^2+8 u+3},\frac{2 u}{(u+1) (3 u+1)}\right),$$
which gives the following model for $\widetilde{D_1}$
$$-192 \left(5 u^2+10 u+3\right) \left(9 u^2+18 u-1\right)=v^2,$$
which is birational to the elliptic curve
$$E_1: y^2=x^3 - 2892x - 59024,$$
of rank $1$ and torsion subgroup isomorphic to $\Z/2\Z \times \Z/2\Z$.
A generator of infinite order in $E_1(\Q)$ corresponds to $u=-60/233$ and parameterizes the following exotic quintuple
\begin{align*}
	a&=-\frac{29529940110878678717653}{420081952495961042800800},\quad b=-\frac{3041992513146972959}{11548847964
	0779256},\\c&=-\frac{351416293757343837}{2249352029178441082},\quad d=-\frac{1776863948138083954777600}{5140041910
	12768208630559},\\e&=-\frac{927643283361539913482847}{141804790226710724159200}.
\end{align*}
\subsection{Curve $\widetilde{D_2}$}
We have the following parametrization of the curve $D_2$
$$\psi_2(u)= \left(\frac{u (2 u+1)}{(u+1) (u+2)},-\frac{u^2-2 u-2}{u (u+2)}\right),$$
which gives the following model for $\widetilde{D_2}$
$$48 \left(u^2+16 u+10\right) \left(3 u^2-2\right)=v^2,$$
which is birational to the elliptic curve
$$E_2:  y^2 = x^3 - 876x - 9520,$$
of rank $1$ and torsion subgroup isomorphic to $\Z/2\Z \times \Z/2\Z$.
A generator of infinite order in $E_2(\Q)$ corresponds to $u=\frac{113}{23}$ and parameterizes the following exotic quintuple
\begin{align*}
	\Bigl\{&-\frac{482493852225}{293535838544},-\frac{1058592509345792}{1212259417081713},-\frac{120747048705
	6449}{74793264945984},\\&-\frac{18858398366873}{437001310622800},-\frac{695331110026639}{116388239242275
}\Bigr\}.
\end{align*}
\subsection{Curve $\widetilde{D_3}$}
We have the following parametrization of the curve $D_3$
$$\psi_3(u)=\left(-\frac{2 u+1}{u^2+u+1},\frac{u^2+4 u+1}{(u-1) (u+1)}\right),$$
which gives the following model for $\widetilde{D_3}$
$$-48 \left(u^2-3 u-1\right) \left(u^2+5 u+3\right)=v^2,$$
which is birational to the elliptic curve
$$E_3:   y^2 + x y + y = x^3 - x^2 - 41 x + 96,$$
of rank $1$ and torsion subgroup isomorphic to $\Z/2\Z \times \Z/2\Z$.
A generator of infinite order in $E_3(\Q)$ corresponds to $u=-4$ and parameterizes the following exotic quintuple
$$
\left\{-\frac{5632}{1365},-\frac{4459}{330},-\frac{143}{840},-\frac{3375}{32032},-\frac{2457}{1760}\right\}.
$$

\section{Concluding remarks}
While we have found infinitely many rational Diophantine quintuples with $D(0)$ property, it remains open if there is a rational Diophantine quintuple with square elements.
On the other hand, there are infinitely many rational Diophantine quadruples with square elements, for example the following two parametric family has this property
\begin{align*}
a&=\frac{3^2(s-1)^2(s+1)^2v^2}{2^2(2s^3-2s+v^2)^2},\\
b&=\frac{v^2(-4s^3+4s+v^2)^2}{2^2(s+1)^2(s-1)^2(-s^3+s+v^2)^2},\\
c&=\frac{(2s^3-2s+v^2)^2}{3^2 v^2 s^2},\\
d&=\frac{4^2(-s^3+s+v^2)^2 s^2}{v^2 (-4 s^3+4 s+v^2)^2}.
\end{align*}
This family is obtained by taking $t=1/(r-1)$ in the notation of Proposition \ref{prop:abcd}.
We have also found an example of a rational Diophantine quadruple with square elements for which the product $abcd \ne 1$
$$\left\{\left(\frac{18}{77}\right)^2, \left(\frac{55}{96}\right)^2, \left(\frac{56}{15}\right)^2, \left(\frac{340}{77}\right)^2\right\}.$$

{\bf Acknowledgements.}
The authors would like to thank Goran Dra\v zi\'c for the simple proof of Proposition \ref{prop:abcd}.
The authors were supported by the Croatian Science Foundation under the project no.~IP-2018-01-1313.
The authors acknowledge support from the QuantiXLie Center of Excellence, a project
co-financed by the Croatian Government and European Union through the
European Regional Development Fund - the Competitiveness and Cohesion
Operational Programme (Grant KK.01.1.1.01.0004).

\end{document}